\date{}
\newtheorem{theorem}{Theorem}
\newtheorem{proposition}{Proposition}
\theoremstyle{definition}
\newcommand{\IR}{{\mathbb{R}}}
\newcommand{\bs}{\backslash}
\newcommand{\0}{\emptyset}
\newcommand{\inte}{\mathop{\rm int}\nolimits}
\newcommand{\conv}{\mathop{\rm conv}\nolimits}
\newcommand{\diam}{\mathop{\rm diam}\nolimits}
\newcommand{\eps}{\varepsilon}
\renewcommand{\phi}{\varphi}
\begin{document}
\title{On curves contained in convex subsets
of the plane}
\author{Don Coppersmith}
\author{Gyozo Nagy}
\author{Sasha Ravsky}
\email{dcopper@us.ibm.com;gyozo.nagy@raiffeisen.hu;oravsky@mail.ru}
\address{Department of Mathematical Sciences,
IBM Thomas J Watson Research Center,
Yorktown Heights NY 10598, USA;
Technical Center, Raiffeisen Hungary, Budapest Kesmark 11-13, 1158, Hungary;
Department of Mathematics, Ivan Franko Lviv National University,
Universytetska 1, Lviv, 79000, Ukraine}
\keywords{convex body, strictly convex body}
\subjclass{52A10, 52A38}
\begin{abstract}
If $K'\subset K$ are convex bodies of the plane
then the perimeter of $K'$ is not greater than the perimeter of $K$.
We obtain the following generalization of this fact.
Let $K$ be a convex compact body
of the plane with the perimeter $p$ and the
diameter $d$ and $r>1$ be an integer. Let $s$ be the
smallest number such that for any curve of length
greater than $s$ contained in $K$ there is a straight line intersecting
the curve at
least in $r+1$ different points. Then $s=rp/2$ if $r$ is even and
$s=(r-1)p/2+d$ if $r$ is odd.
\end{abstract}

\maketitle \baselineskip15pt

Let $K$ be a compact subset of the plane.  Recall that the set $K$ is
called {\em convex} if for each pair of points $x,y\in K$ the segment
$[x,y]$ connecting these points belongs to $K$.  The set $K$ is called
{\em strictly convex} if for each pair of points $x,y\in K$ the
interval $(x,y)=[x,y]\bs\{x,y\}$ belongs to the interior $\inte K$ of
the set $K$.  Equivalently, the set $K$ is strictly convex if its
boundary $dK$ contains no segments of nonzero length.

Let $I=[0;1]$ be the unit segment.  A {\em curve} is a continuous
map $\phi:I\to\IR^2$. Let $0=a_0<\dots<a_n=1$ be a sequence. Let
$[\varphi]$ be the broken line with the sequence of vertices
$\varphi(a_0),\dots,\varphi(a_n)$ that is the union of the segments
$\bigcup_{i=0}^{n-1}[\varphi_i;\varphi_{i+1}]$. By
$l(a_0,\dots,a_n)$ we denote the length of the broken line $[\varphi]$.
We say that the curve $\phi$ has {\em length}
$l(\phi)$ if for each $\eps>0$ there is
$\delta>0$ such that $|l(\phi)-l(a_0,\dots,a_n)|<\eps$ for any
sequence $0=a_0<\dots<a_n=1$ with $a_{i+1}-a_i<\delta$ for each $0\le
i\le n-1$.

The convex subset $K$ of the plane is called a {\em convex body} if
$\inte K\not=\0$.  If $K$ is a convex body then the boundary $dK$ of
$K$ is an image of a curve and there exists the length $l(dK)$ which
is equal to the upper bound of the perimeters of convex polygons
\footnote{We can consider a convex polygon as the convex hull of a
broken line}
inscribed in $K$ \cite[p.  373]{Ale},\cite[p.28]{YB}.  The length of
the boundary we shall call the {\em perimeter} of the body $K$ and
shall denote as $p(K)$.

It is well known that $p(K')\le p(K)$ for every pair of convex bodies
$K$ and $K'$ with $K'\subset K$ (\cite[p.  373]{Ale},\cite[p.28]{YB}).
This means that if the image of a curve $\phi:I\to K$ is a boundary of
a convex body then $l(\phi)\le p(K)$.  A slightly weaker claim is: if
the image of a curve $\phi:I\to K$ is a boundary of a strictly convex
body then $l(\phi)\le p(K)$.  The boundary $dK'$ of the strictly
convex body $K'$ has the property that every line intersects $dK'$ in
at most two points.  We are going to generalize the claim showing that
$l(\phi)\le p(K)$ for every curve $\phi:I\to K$ with every line
intersecting the image of $\phi$ at most in two points.  This result
is closely related with the following proposition.

\begin{proposition} Let $S^1$ be a circle and $\phi:S^1\to\IR^2$ be a
homeomorphic embedding. Then the following conditions are equivalent:

(1) $\phi(S^1)$ is a boundary of a strictly convex body.

(2) Every line intersects the image of $\phi$ at most in two points.

(3) Every line intersects the image of $\phi$ at most in three points.
\end{proposition}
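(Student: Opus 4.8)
The plan is to establish the cycle of implications $(1)\Rightarrow(2)\Rightarrow(3)\Rightarrow(1)$. Since $(2)\Rightarrow(3)$ is immediate, the work lies in the other two directions, and the substance is concentrated in $(3)\Rightarrow(1)$.

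For $(1)\Rightarrow(2)$, suppose $\phi(S^1)=dK'$ for a strictly convex body $K'$, and let $\ell$ be any line. Since $\ell$ and $K'$ are convex, the intersection $\ell\cap K'$ is empty, a single point, or a nondegenerate segment $[c,d]$. In the first two cases $\ell\cap\phi(S^1)\subseteq\ell\cap K'$ has at most one point. In the last case $c,d\in dK'=\phi(S^1)$, while for any interior point $w\in(c,d)$ strict convexity gives $w\in\inte K'$, so $w\notin\phi(S^1)$; hence $\ell\cap\phi(S^1)=\{c,d\}$. In every case the line meets the image in at most two points.

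For $(3)\Rightarrow(1)$, the key step is to show that $(3)$ forces $J:=\phi(S^1)$ to bound a convex body. By the Jordan curve theorem $J$ bounds a compact region $K'$ with $dK'=J$, and since $J$ cannot lie in a line, $H:=\conv(J)$ is a convex body. I claim $K'=H$. If not, then $dH\ne J$, and since every extreme point of $H$ lies in $J$, any point of $dH\bs J$ lies in the relative interior of an edge of $H$; choosing the maximal subsegment avoiding $J$ in its interior, I obtain points $a,b\in J$ with $(a,b)\cap J=\0$ and $[a,b]\subset dH$. Place the supporting line through $a,b$ on the $x$-axis so that $J\subset\{y\ge0\}$. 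The two arcs of $J$ joining $a$ and $b$ cannot lie in the $x$-axis (else $(a,b)$ would meet $J$), so each attains a positive maximal height, say $M_\alpha$ and $M_\beta$. For $0<\eps<\min(M_\alpha,M_\beta)$ the line $y=\eps$ crosses each arc at least twice---once on the way up to its maximum and once on the way down---yielding at least four distinct intersection points with $J$, contradicting $(3)$. Hence $K'=H$ is convex.

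It then remains to upgrade convexity to strict convexity: if $J=dK'$ contained a nondegenerate segment, the line through it would meet $J$ in infinitely many points, again contradicting $(3)$, so $K'$ is strictly convex and $(1)$ holds. I expect the crossing-count argument in $(3)\Rightarrow(1)$ to be the main obstacle, since it is where the seemingly weaker hypothesis ``at most three points'' is converted into genuine convexity; the delicate points are verifying that the bridge $[a,b]$ can be chosen with $(a,b)\cap J=\0$ and that both arcs rise strictly above the supporting line, so that a nearby parallel line is necessarily cut four times.
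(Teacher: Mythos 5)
Your treatment of $(1)\Rightarrow(2)$ and of the final strict-convexity step is fine, and your main argument for $(3)\Rightarrow(1)$ takes a genuinely different route from the paper's: the paper works pointwise, using a radial projection onto a small circle to show that through every point of $J=\phi(S^1)$ there passes a supporting line (otherwise a separating line would cut each of four arcs of $J$, giving four collinear intersection points), hence $J\subset d(\conv J)$; you instead work globally on the boundary of the hull $H=\conv J$, producing a ``bridge'' segment $[a,b]\subset dH$ with $(a,b)\cap J=\emptyset$ and then cutting $J$ four times with a nearby parallel line. That bridge-and-parallel-line argument is sound as far as it goes: the endpoints $a,b$ do lie in $J$ (extreme points of $H$ lie in $J$, and endpoints of maximal segments in $dH$ are extreme), both arcs of $J$ joining $a$ to $b$ must rise strictly above the supporting line, and the intermediate value theorem together with injectivity of $\phi$ yields four distinct collinear points of $J$.

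However, there is a genuine gap in the case analysis. You reduce to ``$dH\neq J$'' and then immediately speak of ``any point of $dH\bs J$.'' Your four-point contradiction only excludes the possibility $dH\not\subset J$; it says nothing about the complementary case in which $dH\subset J$ but $J\not\subset dH$, i.e.\ $dH$ is a \emph{proper} subset of $J$ and part of $J$ lies in $\inte H$. Since $dH\neq J$ does not by itself give $dH\bs J\neq\emptyset$, the proof as written does not conclude. This residual case is indeed impossible, but that requires an argument, and it is precisely the step the paper makes explicit: a subset of a topological circle that is itself homeomorphic to a circle must be the whole circle (the paper applies this to $\phi(S^1)\subset dK$; you would apply it to $dH\subset J$). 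Alternatively, you can patch it inside your own framework without any topology: if $J\cap\inte H\neq\emptyset$, this is a nonempty relatively open subset of $J$, hence contains two distinct points $q_1,q_2$; the line through $q_1$ and $q_2$ meets $dH$ in exactly two further points, which by $dH\subset J$ also belong to $J$, so this line meets $J$ in at least four points, contradicting $(3)$. With either patch your argument becomes a complete and valid alternative to the paper's proof.
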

\begin{proof} The implication $(1)\Rightarrow (2)$ follows from the
definition of the
strictly convex body and the implication $(2)\Rightarrow (3)$ is obvious.
Now we show
the implication $(3)\Rightarrow (1)$. Put $K=\conv\phi(S^1)$.
Since $\phi$ is a
homeomorphism then $K$ is not a segment and hence $K$ is a
convex compact body.
Let $x\in S^1$ be an arbitrary point. Let $C\subset\IR^2$ be the
circle of radius $1$ with the center at the point $\phi(x)$ and
$p:\IR^2\bs\{\phi(x)\}\to C$ be the radial projection.
Since the map $p$ is continuous the set
$p\phi(S^1\bs\{ x\})$ is connected and therefore an arc.
Suppose that the length of $p\phi(S^1)$ is greater than $\pi$.
Then there
exist points $x_1$, $x_2$, $x_3\in S^1\bs\{x\}$ such that the
point $\phi(x)$ lies in
the interior of the triangle with the vertices $p\phi(x_1)$,
$p\phi(x_2)$ and $p\phi(x_3)$.
Then the point $\phi(x)$ lies in the interior of the triangle with the
vertices $\phi(x_1)$, $\phi(x_2)$ and $\phi(x_3)$.  Fix an orientation
on the circle $S^1$.  After a re-enumerating we may suppose that the
points lie on the circle $S^1$ in the order $x$, $x_1$, $x_2$, $x_3$.
Let $l\subset\IR^2$ be a line separating
the points $\phi(x)$ and $\phi(x_2)$ from the points $\phi(x_1)$ and
$\phi(x_3)$. Since the images under the map $\phi$ of the oriented arcs
$(x,x_1)$, $(x_1,x_2)$, $(x_2,x_3)$ and $(x_3,x)$ are linearly connected
then each of the images contain a point from the line $l$. Obtained
contradiction shows that the length of the arc $p\phi(S^1\bs\{x\})$ is
not greater than $\pi$.

Therefore there is a line $l$ going through the point
$\phi(x)$ such that one of the open
half planes created by $l$ contains no points of the set
$p\phi(S^1\bs\{x\})$ and hence no points of the set $\phi(S^1)$.  This
implies that the point $\phi(x)$ is a boundary point of the set $K$.

Hence $\phi(S^1)\subset dK$. Since $ dK$ is homeomorphic to a
circle \cite[p. 372]{Ale} and $\phi(S^1)\subset dK$ is homeomorphic to a
circle the $\phi(S^1)$ must coincide with $dK$. The set $K$ is strictly
convex since its boundary $\phi(S^1)$ contains no segments of nonzero length.
\end{proof}

The main result of the paper is the following

\begin{theorem} Let $K\subset\IR^2$ be a convex compact body
with perimeter $p$, diameter $d$ and let $r>1$ be an integer.
Let $s$ be the smallest number such that for any curve
$\varphi\subset K$ of length
greater than $s$ there is a line intersecting the curve $\varphi$
at least in
$r+1$ different points.
Then
$$s=
\begin{cases}
rp/2, \mbox{if}\; $r$\; \mbox{is even} \\
(r-1)p/2+d,  \mbox{if}\; $r$\; \mbox{is odd.}\\
\end{cases}$$
\end{theorem}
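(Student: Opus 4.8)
The plan is to argue one direction at a time and then integrate, using the Banach indicatrix together with a parity observation, and finally to exhibit near-extremal curves. For a unit vector $e_\theta=(\cos\theta,\sin\theta)$ with $\theta\in[0,\pi)$, set $f_\theta(t)=\langle\varphi(t),e_\theta\rangle$ and let $N(\theta,p)=\#\{t\in I:f_\theta(t)=p\}$ count the crossings of $\varphi$ with the line $\{\langle x,e_\theta\rangle=p\}$; for the curves that matter this equals the number of distinct intersection points. Since $\varphi$ is rectifiable each $f_\theta$ has bounded variation, so Banach's indicatrix theorem gives $\mathrm{TV}(f_\theta)=\int_{\IR}N(\theta,p)\,dp$, and averaging the elementary identity $\int_0^\pi|\langle v,e_\theta\rangle|\,d\theta=2|v|$ along $\varphi$ yields
$$l(\varphi)=\frac12\int_0^\pi\int_{\IR}N(\theta,p)\,dp\,d\theta.$$

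First I would prove the upper bound $l(\varphi)\le s$ assuming $N(\theta,p)\le r$. The key refinement is a parity remark: writing $A=\varphi(0)$ and $B=\varphi(1)$, for almost every $p$ the integer $N(\theta,p)$ is odd exactly when $p$ lies strictly between $\langle A,e_\theta\rangle$ and $\langle B,e_\theta\rangle$ (the intermediate value theorem governs the parity of crossings) and is even otherwise. As $\varphi\subset K$, for each $\theta$ the function $p\mapsto N(\theta,p)$ is supported in an interval of length at most the width $w(\theta)$ of $K$, while its "odd" part has length exactly $|\langle A-B,e_\theta\rangle|$. When $r$ is even, $N\le r$ on the even set and $N\le r-1$ on the odd set give $\int N\,dp\le r\,w(\theta)-|\langle A-B,e_\theta\rangle|$; when $r$ is odd the roles reverse, giving $\int N\,dp\le(r-1)w(\theta)+|\langle A-B,e_\theta\rangle|$. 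Integrating over $\theta$ and invoking Cauchy's formula $\int_0^\pi w(\theta)\,d\theta=p$ together with $\tfrac12\int_0^\pi|\langle A-B,e_\theta\rangle|\,d\theta=|A-B|$, I obtain $l(\varphi)\le rp/2-|A-B|\le rp/2$ for even $r$, and $l(\varphi)\le(r-1)p/2+|A-B|\le(r-1)p/2+d$ for odd $r$ using $|A-B|\le d$. In both cases $l(\varphi)\le s$; the endpoint distance also signals where extremal curves live (coincident endpoints when $r$ is even, a diametral pair when $r$ is odd).

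It remains to show $s$ is not too large, i.e. to construct for each $\eps>0$ a curve $\varphi\subset K$ with $l(\varphi)>s-\eps$ met by every line in at most $r$ points. For even $r=2k$ I would take $k$ nested convex loops running within $\eps$ of $dK$ (inner parallel curves), each of length close to $p$ and each met by a line in at most two points, and splice them into one closed curve using short bridges clustered near a single boundary point; this gives length close to $kp=s$ with crossing number at most $2k=r$. For odd $r=2k+1$ I would attach to such a $k$-loop configuration a nearly straight path joining a diametral pair $A,B$ with $|A-B|=d$, adding length $\approx d$ and at most one crossing per line, for total length close to $kp+d=s$ and crossing number at most $2k+1=r$.

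The main obstacle is the sharpness construction rather than the upper bound: the bridges (and, for odd $r$, the diametral path) must be routed so that no line gains more than $r$ crossings near the splices or near the clustering point, while each direction still sees the full width $w(\theta)$ with multiplicity $r$ so that the length genuinely approaches $s$. A secondary issue deserving care throughout is the exact meaning of "$r+1$ different points": the argument controls the crossing count $N(\theta,p)$, which coincides with the number of distinct intersection points for simple curves, so restricting to (or approximating by) such curves—as the near-extremal examples are—reconciles the two counts and is precisely what makes the bound attained.
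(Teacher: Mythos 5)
Your upper bound is essentially the paper's own argument in a cleaner wrapper: the paper inscribes a broken line, sums projections of its segments over all directions, and applies Cauchy's formula $\int_0^{2\pi}k(\alpha)\,d\alpha=2p$, using the parity of crossings of the closed broken line in the odd case; you run the same Crofton-type averaging directly on the curve via Banach's indicatrix and use the endpoint-parity observation uniformly in both cases, which even yields the slightly sharper bound $rp/2-|A-B|$ for even $r$. Both versions share the same unaddressed delicacy, which you at least flag: the averaging controls parameter (crossing) counts, while the theorem speaks of distinct points, and the two agree only when the relevant intersections avoid self-intersections of the curve; the paper silently identifies ``the line meets $r+1$ segments of the broken line'' with ``$r+1$ different points of the curve'' and has the same issue, so this is not a gap specific to your proposal.

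The genuine gap is in your lower-bound construction for odd $r=2k+1$. Your claim that a ``nearly straight path joining a diametral pair'' adds ``at most one crossing per line'' is false, and no routing of the path alone can repair it while the $k$ loops stay intact. Indeed, each loop of length close to $p$ must lie Hausdorff-close to $dK$, while any path from $A$ to $B$ of length close to $d=|A-B|$ must stay close to the segment $[A,B]$; since $K$ has nonempty interior, for small $\eps$ such a path contains two distinct points lying strictly inside the innermost loop. The line through these two points meets the path in at least two points and meets each of the $k$ nested closed loops in exactly two points (it passes through the interior of each), giving at least $2k+2=r+1$ distinct intersection points; and if you instead keep the path perfectly straight, the line containing it meets the curve in a continuum of points. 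So full loops plus a diametral path can never have crossing number at most $r$: the loops themselves must be broken in a coordinated way. This is exactly the point the paper is careful about: its curve goes only \emph{almost} $n$ times around, with the small gaps of the loops aligned near one endpoint of the diameter, and the ``curved diameter'' confined to a thin triangle inscribed in the last loop, so that any line meeting the curved diameter twice is forced through the gap region and meets the loop part at most $r-2$ times. Without such a mechanism your construction does not establish $s\ge(r-1)p/2+d$, so the odd case of the theorem remains unproved in your proposal.
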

\begin{proof}
Fix the compact body $K$ and the number $r$.
Let $p$ be the perimeter of $K$ and $d$ be the diameter of $K$.

{\em The upper bound}.
Put $s=rp/2$ if $r$ is even and $s=(r-1)p/2+d$ if $r$ is odd. Let
$\varphi:I\to K$ be a curve of length greater than $s$. By the
definition of the length we can choose numbers
$0=a_0<...<a_n=1$ such that
the length of the broken line with the sequence of vertices $
\varphi(a_0),...,\varphi(a_n)$ is greater than $s$.
Let $l_1,\dots,l_n$ be the
lengths of the segments of the broken line and
$\alpha_1,\dots,\alpha_n$ be the
angles with these segments and the $Ox$ axis.
For an angle $\alpha\in [0;2\pi]$,
by $l(\alpha)$ we denote the sum of the lengths of the projections of the
segments $l_i$ onto the line
$L_\alpha=\{(x,y):(x,y)=t(\cos\alpha,\sin\alpha)\}$. Thus $l(\alpha)=\sum
l_i|\cos(\alpha-\alpha_i)|$. By $k(\alpha)$ we denote the length of the
projection of $K$ onto the line $L_\alpha$.
By Cauchy formula \cite[6.1.5]{Had}

$$\int_0^{2\pi}k(\alpha)d\alpha=2p.$$

At first suppose that $r$ is even. To the
rest of the proof it suffice to show that there is an angle $\alpha'$ such that
$l(\alpha')>rk(\alpha')$. Suppose the contrary. Then $$
 2rp=r\int_0^{2\pi}k(\alpha)d\alpha\ge
 \int_0^{2\pi}l(\alpha)d\alpha=
 \int_0^{2\pi}\sum l_i|\cos(\alpha-\alpha_i)| d\alpha=
$$ $$
 \sum\int_0^{2\pi} l_i|\cos(\alpha-\alpha_i)| d\alpha=
 \sum\int_0^{2\pi} l_i|\cos(\beta)| d\beta=
 \sum 4l_i>4s=2rp,
 $$
the contradiction.

Now suppose that $r$ is odd.  Add to the broken line the segment
connecting its ends.  Let the length of the segment be $l_0$ and
$\alpha_0$ be the angle with the segment and the $Ox$ axis.  To the
rest of the proof it suffice to show that there is an angle $\alpha'$
such that
$l(\alpha')>rl_0|\cos(\alpha'-\alpha_0)|+(r-1)(k(\alpha')-l_0|
\cos(\alpha'-\alpha_0)|)=(r-1)k(\alpha')+l_0|\cos(\alpha'-\alpha_0)|$.
That is because each line not intersecting
$l_0$ and the vertices of the broken line intersects the broken line
an even number of times.

Suppose the contrary. Then $$ 2(r-1)p+4l_0=
\int_0^{2\pi}(r-1)k(\alpha)+l_0|\cos(\alpha-\alpha_0)|d\alpha\ge
 \int_0^{2\pi}l(\alpha)d\alpha=
 \int_0^{2\pi}\sum l_i|\cos(\alpha-\alpha_i)| d\alpha=
$$ $$
 \sum\int_0^{2\pi} l_i|\cos(\alpha-\alpha_i)| d\alpha=
 \sum\int_0^{2\pi} l_i|\cos(\beta)| d\beta=
 \sum 4l_i>4s=2(r-1)p+4d,
 $$
which yields the contradiction since $l_0\le d$.\qed
\vskip5pt
\hskip5pt

{\em The lower bound}. The idea of the proof is the following. Put $n=\lfloor
r/2\rfloor$. If $r$ is even, let the curve go $n$ times around the perimeter,
curving slightly to avoid any straight lines but remaining convex. If $r$ is
odd, let the curve go almost $n$ times around the perimeter, then down the
diameter, again always curving slightly. The reason we go ``almost" $n$ times
around the perimeter is to ensure that any straight line intersecting this
``curved diameter" twice, will intersect the perimeter-curve at most $r-2$
times rather than $r-1$. The picture illustrates the construction when $K$ is a
square and $r=5$.

\begin{figure}[h]
\hskip300pt
\psfig{file=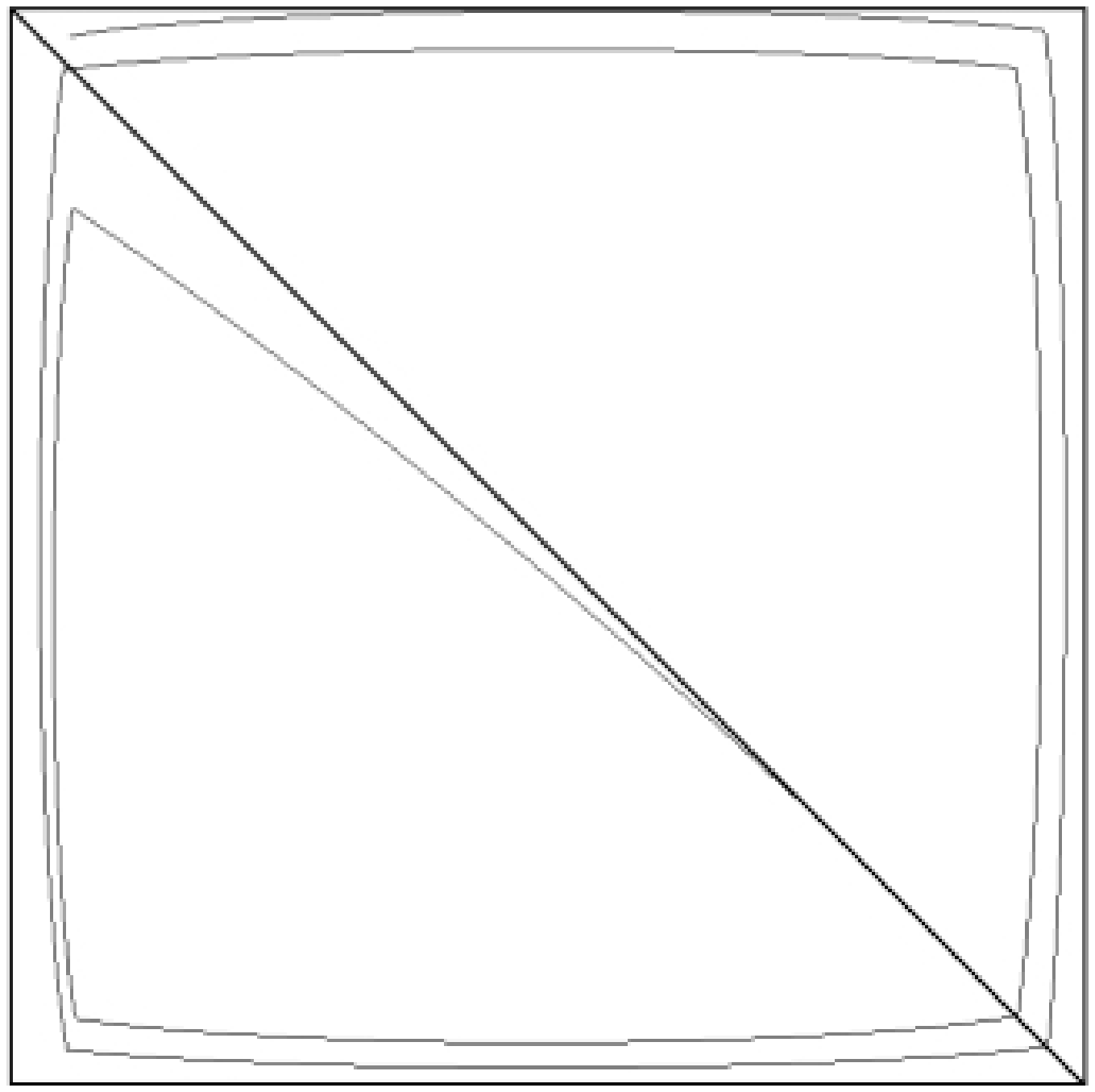,width=100mm}
\end{figure}

Now more precisely. Let $\eps>0$. At first we suppose that $r$ is even.
There is a curve $\phi_1:I\to K$ such that $\phi_1(0)=\phi_1(1)$,
$|l(\phi_1)-p|<\eps$ and
$\phi_1(I)$ is the boundary of a strictly convex body. Choose a number
$\delta_1>0$ such that $l(\phi_1([1-\delta_1;1]))<\eps$. Similarly, there
is a curve $\phi_2:I\to K$ such that $\phi_2(0)=\phi_2(1)$, $|l(\phi_1)-l(\phi_2)|<\eps$,
$\phi_2(I)$ is the boundary of a strictly convex body,
$\phi_2(0)=\phi_1(1-\delta)$
and $\phi_2((0;1))\subset\inte\conv\phi_1(I)$. Choose a number
$\delta_2>0$ such that $l(\phi_2([1-\delta_2;1]))<\eps$.
Similarly to the
previous we can construct the curve $\phi_3:I\to K$ and so on.
From the curves
$\phi_1,\dots,\phi_n$ it is easily to construct the curve $\phi$
intersecting every straight line in at most $r$ points.

Now suppose that $r$ is odd.  To construct the curve $\phi$ we shall
proceed similarly to the case of even $n$.  By induction we can
construct the curves $\phi_1,\dots,\phi_n$ such that

(i) $|\diam(\phi_1(I))-\diam(K)|<\eps$,

(ii) $|\diam(\phi_i(I))-\diam(\phi_{i-1}(I))|<\eps$ for each $i$,

(iii) there is a number $\alpha_i$ such that
$|\diam(\phi_i(I))-|\phi_i(0)-\phi_i(\alpha_i)||<\eps$ for each $i$.

Now choose a
number $\delta_n>0$ such that
$|\diam(\phi_n(I))-|\phi_i(1-\delta_n)-\phi_i(\alpha_i)||<\eps$
and $l(\phi_n([1-\delta_n;1]))<\eps$. There exists a
curve $\phi_{n+1}:I\to K$ such that

(i) $\phi_{n+1}(0)=\phi_n(1-\delta_n)$ and $\phi_{n+1}(1)=\phi_n(\alpha_n)$,

(ii) $\phi_{n+1}((0;1))$ lies in the interior of the triangle
with the vertices
$\phi_n(0)$, $\phi_n(\alpha_n)$ and $\phi_n(1-\delta_n)$,

(iii) $\phi_{n+1}(I)\cup [\phi_n(1-\delta_n);\phi_n(\alpha_n)]$
is the boundary of a convex body,

(iv) no three points of the set $\phi_{n+1}(I)$ lie on a straight line.

From the curves $\phi_1,\dots,\phi_{n+1}$ it is easy to construct
the curve
$\phi$ intersecting every straight line in at most $r$ points.
\end{proof}

\end{document}